\documentclass{eptcs}
\usepackage{underscore}           

\usepackage{amssymb}
\usepackage{amsmath}
\usepackage{amsthm}
\usepackage{graphicx}
\usepackage{tikz}

\usepackage{subcaption}
\captionsetup{compatibility=false}

\title{A Note on One Less Known Class of Generated Residual Implications}
\author{Vojt\v ech Havlena
\institute{Brno University of Technology, \\ Faculty of Information Technology, \\ Brno, Czech Republic}
\email{xhavle03@stud.fit.vutbr.cz},
\and
Dana Hlin\v en\'a
\institute{Brno University of Technology, \\ Faculty of Electrical Engineering and Communication, \\ Brno, Czech Republic}
\email{hlinena@feec.vutbr.cz}
}

\newtheorem{definition}{Definition}
\newtheorem{remark}{Remark}
\newtheorem{theorem}{Theorem}
\newtheorem{lemma}{Lemma}
\newtheorem{example}{Example}
\newtheorem{corollary}{Corollary}

\begin{document}
\maketitle

 \begin{abstract}

This paper builds on our contribution \cite{HaHl} which studied  modelling of the conjunction in human language. 
We have discussed three different ways of constructing a  conjunction. We have dealt with generated t-norms, generated means and Choquet integral.

In this paper we construct the residual operators based on the above conjunctions. The only operator based on a t-norm is an implication. We show that this implication belongs to the class of generated implications $I^g_N$ which was introduced in \cite{smu4} and studied in \cite{HlBi}. We study its properties. Moreover, we investigate this class of generated implications. Some important properties, including relations between some classes of implications, are given.

  \end{abstract}
  
\section{Introduction}

In \cite{HaHl}, we studied  modelling of the conjunction in human language. We have experimentally rated simple statements and their conjunctions. Then we have tried, on the basis of measured data, to find a suitable function, which corresponds to human conjunction. We have discussed three different ways of constructing a  conjunction. We have dealt with generated t-norms, generated means and Choquet integral. Now we are interested in a construction of the implications based on the above conjuctions.
One of the possible ways to construct the implications is the following transformation
$$\forall x, y, u \in [0,1]; C(x,u) \leq y \iff  R_C(x,y) \geq u.$$
This transformation produces the residual operator $R_C$ based on the given conjunction $C.$ For some conjunctions we can get, in this way, a residual operator which is an implication. 

For better understanding we recall basic definitions and statements used in the paper. 
We deal with multivalued (MV for short) logical connectives, which are monotone extensions of the classical
connectives  on the unit interval $[0,1]$. We turn our attention to the conjunctions in MV-logic. Usually, the  triangular norms are used to interpret the conjunctions in MV-logic.

\begin{definition}\cite{klement2000triangular}
A triangular norm (t-norm for short) is a binary operation on the unit interval $[0,1]$, i.e., a function $T:[0,1]^2 \to [0,1]$, such that for all $x, y, z \in [0,1]$ the following four axioms are satisfied: 
\begin{itemize}
\item (T1) Commutativity
$$T(x,y)=T(y,x),$$
\item (T2) Associativity
$$T\left (T(x,y), z\right )=T\left(x, T(y,z)\right),$$
\item (T3) Monotonicity
$$T(x,y)\leq T(x,z) \mbox{~whenever~} y \leq z,$$
\item (T4) Boundary Condition
$$T(x,1)=x.$$
\end{itemize}
\end{definition}

The four basic t-norms are:
\begin{itemize}
\item the minimum t-norm $T_M(x,y)=\min\{x,y\},$
\item the product t-norm $T_P(x,y)=x \cdot y,$
\item the \L{}ukasiewicz t-norm $T_{L}(x,y)=\max\{0,x+y-1\},$
\item the drastic product $T_D(x,y)=\begin{cases} 0& \mbox{if~} \max\{x,y\}<1,\\
\min\{x,y\}& \mbox{otherwise.}
\end{cases}$
\end{itemize}

We deal only with such continuous t-norms, that are generated by a unary function (the generator). 
One possibility is to generate by an additive generator, which is a
strictly decreasing function $f$ from the unit interval $[0,1]$  to $[0,+\infty]$ such that $f(1) = 0$ and $f(x) + f(y) \in H(f) \cup [f(0^+), +\infty]$ for all $x, y \in [0, 1]$, where $H(f)$ is range of $f$. Then the generated t-norm is given as follows

$$        T(x, y) = f^{(-1)}\left(f(x) + f(y)\right),$$
where $f^{(-1)}:[0,+\infty] \to [0,1]$ and $f^{(-1)}(y)=\sup\{x \in [0,1]\,|\,f(x)>y\}.$
Note, that $f^{(-1)}$ is a pseudo-inverse, which is a monotone extension of the ordinary inverse function.
For an illustration, we give the following example of parametric class of t-norms and their additive generators.

The family of Yager t-norms, introduced  by Ronald R. Yager, is given for $   0 \leq p \leq + \infty  $ by
$$T_p^Y(x,y)=\begin{cases}
T_D(x,y) & \mbox{if } p=0,\\
T_M(x,y) & \mbox{if } p=+ \infty,\\
\max \left\{0,1-\left((1-x)^p+(1-y)^p \right)^{\frac1p}\right\} & \mbox{if~} 0 < p < +\infty.

\end{cases}
$$

The additive generator of $T^{Y}_p$ for $0 < p < +\infty$ is
$$f^Y_p(x)=(1-x)^p.$$

Because of associativity, we can extend t-norms to the $n$-variete case as:

$$ x_T^{(n)}=\begin{cases} x &\mbox {if $n=1,$} \\ {T(x,x_T^{(n-1)})}
&\mbox {if $n>1.$}\end{cases}$$

A t-norm $T$ is called Archimedean if for each $x,y$ in the open interval $]0,1[$ there is a natural number $n$ such that $x^{(n)}_T \leq y.$
It is sufficient to investigate Archimedean t-norms, because every non-Archimedean t-norm can be approximated arbitrarily well with Archimedean t-norms, \cite{Jenei1998273,Jenei1998179}. 

\begin{remark}If $T$ is a t-norm, then the dual function $S:[0,1]^2 \to [0,1]$ defined by $S(x,y)=1-T(1-x,1-y)$ is called a t-conorm.  Its neutral element is $0$ instead of $1$, and all other conditions remain unchanged. Analogously to the case of t-norms, some classes of t-conorms can be generated by  additive generators. The additive generator for a t-conorm is a strictly increasing function $g$ from the unit interval $[0,1]$  to $[0,+\infty]$ such that $g(0) = 0$ and $g(x) + g(y) \in H(g) \cup [g(1^-), +\infty]$ for all $x, y \in [0, 1]$. The generated t-conorm is given by 
$$        S(x, y) = g^{(-1)}\left(g(x) + g(y)\right),$$ 
where $g^{(-1)}(y)=\sup\{x \in [0,1]\,|\,g(x)<y\}$.
Note that t-conorms are usually used for modelling fuzzy disjunctions. 
\end{remark}

Now, we continue  with definitions and properties of fuzzy negations.
\begin{definition} (see e.g. in  \cite{FR})
\label{1}
A function $N:[0,1]\rightarrow [0,1]$ is called a {\em fuzzy
negation} if,
for each $a,b \in [0,1]$, it satisfies the following conditions
\begin{itemize}
\item{(i)} $a < b \Rightarrow N(b) \leq N(a),$
\item{(ii)} $N(0)=1, N(1)=0.$
\end{itemize}
\end{definition}
\begin{remark} A {\em dual negation} $N^d:[0,1] \rightarrow [0,1]$ based on a negation
$N,$ is given by $N^d(x)=1-N(1-x).$
A fuzzy negation $N$ is called {\em strict}  if $N$ is strictly decreasing and continuous for
arbitrary $x,y \in [0,1].$
 In  classical logic we have that $(\mbox{\boldmath{$A$}}')'=\mbox{\boldmath{$A$}}$. 
 In multivalued logic this equality is not satisfied for every negation.
 The negations with this equality are called {\em involutive  negations.}
   The strict negation is {\em strong} if and only if  it is involutive.
   The most important and most widely used strong negation is the standard negation $N_S(x)=1-x.$
   \end{remark}

In the literature, one can find several different definitions of
fuzzy implications. In this paper we will use the following one,
which is equivalent to the definition introduced by Fodor and
Roubens in \cite{FR}. 
\begin{definition}\label{impl}
A function $I:[0,1]^2 \rightarrow [0,1]$ is called a {\em fuzzy
implication} if it satisfies the following conditions:
\begin{itemize}
\item [(I1)]  $I$ is non-increasing in its first variable,
\item [(I2)]  $I$ is non-decreasing in its second variable,
\item [(I3)]  $I(1,0)=0$, $I(0,0)=I(1,1)=1$.
\end{itemize}
\end{definition}

We recall definitions of some important properties of fuzzy
implications which we will investigate.
\begin{definition}\label{propimpl}
A fuzzy implication $I:[0,1]^2 \rightarrow [0,1]$ satisfies:
\begin{itemize}
\item [(NP)] the left neutrality property if
$$I(1,y)=y\quad\mbox{  for all   } y \in [0,1],$$
\item [(EP)] the exchange principle if
$$I(x,I(y,z))=I(y,I(x,z))\quad \mbox{  for all   } x,y,z \in [0,1],$$
\item [(IP)] the identity principle if
$$I(x,x) = 1\quad \mbox{  for all   }x \in [0,1], $$
\item [(OP)] the ordering property if
$$x \leq y \iff I(x,y) =1\quad \mbox{  for all   } x,y \in [0,1],$$
\item [(CP)] the contrapositive symmetry with respect to a given fuzzy
negation $N$ if
$$ I(x,y)=I(N(y),N(x))\quad \mbox{  for all   } x,y \in [0,1].$$
\end{itemize}
\end{definition}
\begin{definition} Let $I:[0,1]^2 \rightarrow [0,1]$ be a fuzzy implication. The function $N_I$
defined by $N_I(x)=I(x,0)$ for all $x \in [0,1]$, is called the
natural negation of $I$.
\end{definition}

$(S,N)$-implications which are based on $t$-conorms and fuzzy
negations form one of the well-known classes of fuzzy implications.
\begin{definition} A function $I: [0,1]^2 \to [0,1]$ is called an
$(S,N)$-implication if there exist a t-conorm $S$ and a fuzzy
negation $N$ such that
$$I(x,y)=S(N(x),y), ~~~ x,y \in [0,1].$$
If $N$ is a strong negation then $I$ is called a strong implication.
\end{definition}
The following characterization of $(S,N)$-implications is from \cite{bac2}.
\begin{theorem} \label{baczinski1} (Baczy\' nski and Jayaram \cite{bac2},
Theorem 5.1)
For a function $I:[0,1]^2 \to [0,1],$ the following statements are
equivalent:
\begin{itemize}
\item $I$ is an $(S,N)$-implication generated from some t-conorm and some
continuous (strict, strong) fuzzy negation $N.$
\item $I$ satisfies (I2), (EP), and $N_I$ is a continuous (strict, strong)
fuzzy negation.
\end{itemize}
\end{theorem}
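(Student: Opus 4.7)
The plan is to prove the two implications separately, with the harder work being in the direction $(2)\Rightarrow(1)$.

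For the direction $(1)\Rightarrow(2)$, I would simply verify each property from the representation $I(x,y)=S(N(x),y)$. (I2) follows because $S$ is monotone in its second argument. (EP) follows because $S$ is commutative and associative: $I(x,I(y,z))=S(N(x),S(N(y),z))=S(N(y),S(N(x),z))=I(y,I(x,z))$. Finally, since $0$ is the neutral element of $S$, we have $N_I(x)=I(x,0)=S(N(x),0)=N(x)$, so $N_I=N$ inherits the assumed continuity/strictness/involutivity property.

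For the direction $(2)\Rightarrow(1)$, I would set $N:=N_I$ and try to recover the t-conorm as $S(u,v):=I(N^{(-1)}(u),v)$, where $N^{(-1)}$ is the pseudo-inverse (the ordinary inverse in the strict case). Before this makes sense I first need the neutrality identity \textbf{(NP)} $I(1,y)=y$. The standard trick is to combine (EP) with the observation $N_I(1)=I(1,0)=0$: applying (EP) to the triple $(1,1,y)$ and iterating, together with continuity of $N_I$ and the boundary values $N_I(0)=1$, $N_I(1)=0$, forces $I(1,\cdot)$ to be the identity on $[0,1]$. This step is the technical heart of the argument and is where the continuity hypothesis on $N_I$ is actually used.

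Once (NP) is in hand, I would verify that $S$ defined above is a t-conorm. Commutativity of $S$ becomes $I(N^{(-1)}(u),v)=I(N^{(-1)}(v),u)$, which reduces via (NP) and (EP) applied to $I(N^{(-1)}(u),I(1,v))=I(1,I(N^{(-1)}(u),v))$ after re-expressing the $1$ as $I(N^{(-1)}(v),N^{(-1)}(v)\text{-image})$; associativity follows analogously from a direct application of (EP); monotonicity follows from (I2) and the monotonicity of $N^{(-1)}$; and the boundary condition $S(0,v)=I(N^{(-1)}(0),v)=I(1,v)=v$ uses (NP). Finally, $S(N(x),y)=I(N^{(-1)}(N(x)),y)$ equals $I(x,y)$ by a short case distinction (identity on the range of $N$ in the strict case, and a careful pseudo-inverse argument using continuity of $N$ in the general case). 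The main obstacle throughout is handling the merely-continuous case where $N$ may have plateaus, so the pseudo-inverse calculus must be used rather than an honest inverse; for this one has to verify that $I$ is constant on the level sets of $N$ in its first variable, which is again forced by (EP) combined with (NP).
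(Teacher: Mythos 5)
First, note that the paper does not prove this statement at all: it is quoted from Baczy\'nski and Jayaram \cite{bac2} as a known characterization, so there is no in-paper proof to compare against. Your overall architecture for a self-contained proof is the standard one (easy direction by direct verification; hard direction by setting $N:=N_I$, establishing (NP), defining $S(u,v):=I\bigl(N^{(-1)}(u),v\bigr)$ and checking the t-conorm axioms via (EP)), and the easy direction is correct as written.

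However, there is a concrete gap at what you yourself call the technical heart. The instance of (EP) you invoke for (NP), namely the triple $(1,1,y)$, reads $I(1,I(1,y))=I(1,I(1,y))$ and is a tautology; no amount of iteration extracts $I(1,y)=y$ from it. The identity that actually powers the whole converse direction is the $z=0$ instance of (EP),
$$
I(x,I(y,0))=I(y,I(x,0)), \qquad\text{i.e.}\qquad I(x,N_I(y))=I(y,N_I(x)).
$$
Since $N_I$ is continuous with $N_I(0)=1$ and $N_I(1)=0$, it is onto $[0,1]$, so every $v\in[0,1]$ equals $N_I(y)$ for some $y$; setting $x=1$ then gives $I(1,v)=I(y,N_I(1))=I(y,0)=N_I(y)=v$, which is (NP). The same identity, not (NP), is what yields commutativity of $S$ (write $u=N_I(N_I^{(-1)}(u))$ and $v=N_I(N_I^{(-1)}(v))$ and swap), and it is also what shows that $I(\cdot,v)$ depends on its first argument only through $N_I$ (from $I(x,v)=I(N_I^{(-1)}(v),N_I(x))$), which is exactly the plateau lemma you need to conclude $S(N(x),y)=I\bigl(N^{(-1)}(N(x)),y\bigr)=I(x,y)$. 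Your sketch of commutativity via re-expressing $1$ inside (EP) is circular for the same reason. So the skeleton is right, but the one lemma everything hangs on --- $I(x,N_I(y))=I(y,N_I(x))$ combined with surjectivity of the continuous $N_I$ --- is absent and is replaced by a step that does not work.
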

Another way of extending the classical binary implication 
to the unit interval $[0,1]$ is based on the residual operator
with respect to a left-continuous triangular norm $T$
$$I_T(x,y)=\max\{z \in [0,1]\ |\ T(x,z) \leq y\}.$$
Elements of this class are known as $R$-implications.
The following characterization of $R$-implications is from \cite{FR}.
\begin{theorem} \label{bacz2} (Fodor and Roubens \cite{FR}, Theorem
1.14)
For a function $I:[0,1]^2 \to [0,1],$ the following statements are
equivalent:
\begin{itemize}
\item $I$ is an $R$-implication based on some left-continuous t-norm $T.$
\item $I$ satisfies (I2), (OP), (EP), and $I(x,.)$ is  right-continuous for
any $x \in [0,1]$.
\end{itemize}
\end{theorem}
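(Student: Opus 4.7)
The plan is to prove the two implications separately, with the technical hinge being the Galois adjunction
\[
T(x,z)\le y \iff z \le I(x,y),
\]
which for a left-continuous t-norm $T$ with $I=I_T$ holds on all of $[0,1]^3$ — left-continuity of $T$ is exactly what makes the supremum in the definition of $I_T$ attained, so that $T(x,I_T(x,y))\le y$ is available.

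For the forward direction I would take $I = I_T$ with $T$ left-continuous and check the four properties directly. (I2) is immediate since enlarging $y$ enlarges $\{z\,|\,T(x,z)\le y\}$. (OP) reduces, via (T4), to the equivalence $T(x,1)\le y \iff x\le y$. Right-continuity of $I(x,\cdot)$ is the standard fact that the upper adjoint of a left-continuous map is right-continuous. (EP) comes from reading
\[
w\le I(x,I(y,z))\iff T(x,w)\le I(y,z)\iff T(y,T(x,w))\le z
\]
and observing that, by commutativity and associativity of $T$, the last condition is symmetric in $x$ and $y$.

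For the reverse direction, assuming $I$ satisfies (I2), (OP), (EP) and has right-continuous second sections, I propose the candidate
\[
T(x,y) := \inf\{z\in[0,1]\,|\,I(x,z)\ge y\},
\]
and then carry out the following programme: (i) prove the adjunction $T(x,z)\le y \iff z\le I(x,y)$ using (I2) together with the right-continuity of $I(x,\cdot)$ (which promotes the infimum to a minimum when the set is nonempty — and (I3) gives nonemptiness via $I(x,1)=1$); (ii) deduce $T(x,1)=x$ from (OP); (iii) derive the ``medial'' identity $T(x,T(y,z))=T(y,T(x,z))$ from (EP) through the chain $T(x,T(y,z))\le w \iff z\le I(y,I(x,w)) \iff z\le I(x,I(y,w))$; (iv) specialise (iii) with $z=1$ and use (ii) to obtain commutativity $T(x,y)=T(y,x)$; (v) combine (iii) with (iv) to get full associativity, completing the t-norm axioms; (vi) read left-continuity of $T$ off the adjunction using right-continuity of $I(x,\cdot)$; and (vii) conclude $I_T(x,y)=\sup\{z\,|\,z\le I(x,y)\}=I(x,y)$, again by the adjunction.

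The main obstacle is step (i): turning the one-sided monotone bound $T(x,z)\le y \Rightarrow z\le I(x,y)$ into a genuine equivalence requires showing that $I(x,T(x,y))\ge y$, i.e., that the infimum defining $T$ is attained. This is precisely where the right-continuity hypothesis on $I(x,\cdot)$ is indispensable, since it allows one to pass a decreasing sequence from inside the set $\{z\,|\,I(x,z)\ge y\}$ through $I(x,\cdot)$ and preserve the inequality. Once the adjunction is secured, the derivations in (ii)--(vii) are essentially formal manipulations of inequalities, and the non-obvious algebraic step — deriving commutativity from (EP) — is handled cleanly by the $z=1$ specialisation in (iv).
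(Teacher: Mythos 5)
The paper offers no proof of this theorem: it is imported verbatim from Fodor and Roubens (Theorem 1.14 of \cite{FR}) as a known characterization, so there is nothing internal to compare your argument against. Your proof is the standard residuation (Galois adjunction) argument and is essentially sound: the equivalence $T(x,z)\le y \iff z\le I(x,y)$ is indeed the hinge of both directions, the forward verification of (I2), (OP), (EP) and right-continuity is correct, and in the converse your derivation of commutativity and associativity of the candidate $T(x,y)=\inf\{z\mid I(x,z)\ge y\}$ from (EP) via the medial identity and the $z=1$ specialisation is the right (and non-obvious) move. Three small repairs are needed. First, the monotonicity axiom (T3) for the constructed $T$ is never verified; it is immediate (increasing $y$ shrinks the set $\{z\mid I(x,z)\ge y\}$ and hence increases the infimum), but it is one of the four t-norm axioms and must be stated. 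Second, nonemptiness of $\{z\mid I(x,z)\ge y\}$ should be deduced from (OP), which yields $I(x,1)=1$ because $x\le 1$, not from (I3): (I3) is not among the hypotheses of the second bullet and in any case only constrains $I$ at the corners. Third, for the conclusion you need the constructed $T$ to be a \emph{left-continuous} t-norm; your adjunction gives sup-preservation, hence left-continuity, of $T(x,\cdot)$, and commutativity transfers it to the first variable — a sentence noting that separate left-continuity of a monotone commutative operation is exactly what the definition requires would close the argument cleanly.
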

At last we introduce a characterization of implications based on $\Phi$-conjugate from~\cite{bac2}.
\begin{definition}
 We denote  by $\Phi$ the family of all increasing bijections on the unit interval $[0,1].$ We say that implications $I_1, I_2: [0,1]^2 \rightarrow [0,1]$ are
 $\Phi$-conjugate if there exists a bijection $\varphi\in\Phi$ such that
 $I_2 = (I_1)_\varphi$, where
 $$
 	(I_1)_\varphi(x,y) = \varphi^{-1}(I_1(\varphi(x), \varphi(y))),
 $$
 for all $x,y\in[0,1]$.
\end{definition}
\begin{theorem}\label{the:conjugate}(Baczy\' nski and Jayaram \cite{bac2},
Theorem 2.4.20)
 Let $I: [0,1]^2\rightarrow [0,1]$ be a function. Then $I$ is a continuous 
 function satisfying (OP), (EP), if and only if, $I$ is $\Phi$-conjugate
 with the \L ukasiewicz implication.
\end{theorem}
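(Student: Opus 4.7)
The plan is to address the two directions separately. The forward direction ($\Leftarrow$) is a direct verification: since every $\varphi \in \Phi$ is a monotone bijection of $[0,1]$, both $\varphi$ and $\varphi^{-1}$ are continuous, so $(I_L)_\varphi$ is continuous as a composition with the continuous \L{}ukasiewicz implication $I_L$. The ordering property transfers because $\varphi$ is strictly increasing, giving $I(x,y)=1 \iff I_L(\varphi(x),\varphi(y))=1 \iff \varphi(x)\leq\varphi(y) \iff x \leq y$. The exchange principle transfers because in $I(x,I(y,z)) = \varphi^{-1}\bigl(I_L(\varphi(x), \varphi\circ\varphi^{-1}(I_L(\varphi(y),\varphi(z))))\bigr)$ the middle $\varphi\circ\varphi^{-1}$ cancels, reducing the claim to (EP) for $I_L$.

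For the non-trivial direction ($\Rightarrow$), I would aim to apply the Fodor--Roubens characterization, Theorem \ref{bacz2}. Under the hypotheses, axiom (I3) is immediate from (OP) evaluated at $(0,0)$, $(1,1)$, and $(1,0)$. Monotonicity in each argument must be extracted from (OP), (EP), and continuity; a useful seed identity is $I(y,I(x,y)) = I(x,I(y,y)) = I(x,1) = 1$, which by (OP) yields $y \leq I(x,y)$, and iterated (EP) substitutions combined with continuity then deliver both (I1) and (I2). Continuity trivially gives right-continuity of $I(x,\cdot)$, so Theorem \ref{bacz2} applies and produces a left-continuous t-norm $T$ with $I = I_T$.

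The concluding step identifies $T$ up to $\Phi$-conjugacy. Full continuity of $I_T$ on $[0,1]^2$, combined with (OP), forces $T$ to be continuous and Archimedean: any non-trivial idempotent of $T$ would produce a line of discontinuity in its residuum, and strict decrease of $T(x,\cdot)$ on the region where $I_T<1$ excludes further zero divisors. By the Mostert--Shields representation, every continuous Archimedean t-norm is $\Phi$-conjugate either to $T_L$ or to $T_P$; the residuum of $T_P$, namely the Goguen implication $\min(1,y/x)$ extended by $1$ at $x=0$, is discontinuous at $(0,0)$, whereas $I$ is continuous. Hence $T = (T_L)_\varphi$ for some $\varphi \in \Phi$, and a direct check shows the same $\varphi$ conjugates the associated residuals, yielding $I = (I_L)_\varphi$. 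The main obstacle is this last arc of the argument---passing from joint continuity of $I$ to the conclusion that $T$ is continuous Archimedean, and then excluding the product case---while the derivation of (I1)--(I2) from (OP), (EP), and continuity is routine but technically delicate and is precisely where the continuity hypothesis (as opposed to mere right-continuity, which would leave $I$ only in the class of $R$-implications) does essential work.
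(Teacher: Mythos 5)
This theorem is quoted by the paper from Baczy\'nski and Jayaram (Theorem 2.4.20) without any proof, so there is no in-paper argument to compare yours against; I can only assess the sketch on its own terms. Your $(\Leftarrow)$ direction is correct and complete: an increasing bijection of $[0,1]$ is automatically continuous, so continuity transfers, and (OP) and (EP) transfer under conjugation exactly as you compute.

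The $(\Rightarrow)$ direction has a sensible architecture (get into the $R$-implication class via Theorem \ref{bacz2}, then identify the t-norm up to conjugacy and exclude the strict case via the discontinuity of the Goguen implication at the origin), but its two load-bearing steps are asserted rather than proved. First, to invoke Theorem \ref{bacz2} you must supply (I2), and (OP) together with (EP) do \emph{not} imply monotonicity in general --- that is precisely why (I2) is listed as a separate hypothesis in Theorem \ref{bacz2}. Extracting (I1)--(I2) from (OP), (EP) and continuity is a genuine argument, not a routine iteration of your seed identity $y \le I(x,y)$; as written, this step is a placeholder. (A smaller slip: (OP) alone gives only $I(1,0)<1$, not $I(1,0)=0$; you need (EP) as well, e.g.\ with $a=I(1,0)$ one has $I(1,I(a,0))=I(a,I(1,0))=I(a,a)=1$, hence $I(a,0)=1$ and $a\le 0$ by (OP).) Second, the claim that joint continuity of $I_T$ forces $T$ to be continuous and Archimedean is the crux of the whole theorem, and your justification (``a non-trivial idempotent would produce a line of discontinuity'', ``strict decrease \dots excludes further zero divisors'') is a gesture rather than a proof: for a merely left-continuous $T$, an idempotent $e$ does not even force $T(e,y)=\min(e,y)$ for $y<e$ (the nilpotent minimum is a counterexample), so the local structure of $T$ near an idempotent must be derived from the continuity of $I$ itself, for instance by first showing that $N_I$ is a strong negation and that $I$ satisfies (CP) with respect to it. Until these two steps are filled in, what you have is a correct outline of a known proof strategy, not a proof.
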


It is well-known that it is possible to generate t-norms from  one variable
functions.
Therefore the question whether something similar is possible in the case of fuzzy implications is very interesting. In \cite{Yag} Yager introduced two new classes of fuzzy
implications: $f$-implications and $g$-implications where their generators $f$ 
are continuous additive generators of continuous Archimedean t-norms  and generators $g$
are continuous additive generators of continuous
Archimedean t-conorms.

In this paper we deal with some of less known classes of generated fuzzy
implications which were introduced in \cite{smu4} and studied in \cite{HlBi}.

The first class of generated implications is based on strictly increasing functions~$g.$
\begin{theorem}\label{implg}\cite{smu4}
Let  $g:[0,1]\rightarrow [0,\infty]$ be a strictly increasing function such
that $g(0)=0$. Then the function $I^g:[0,1]^2 \rightarrow [0,1]$ which
is given by
\begin{equation}
I^g(x,y)=g^{(-1)}(g(1-x)+g(y)),
\end{equation}
is a fuzzy implication.
\end{theorem}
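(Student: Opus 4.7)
The plan is to verify directly that $I^g$ satisfies the three defining conditions (I1), (I2), and (I3) of a fuzzy implication. The function $I^g$ is built by composing $g^{(-1)}$ with the intermediate expression $g(1-x)+g(y)$, so the strategy is to track how monotonicity propagates through this composition and then compute the boundary values.

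First I would record the elementary observation that the pseudo-inverse $g^{(-1)}$ is non-decreasing, which is immediate from the set inclusion $\{x : g(x)<y_{1}\}\subseteq\{x : g(x)<y_{2}\}$ whenever $y_{1}\leq y_{2}$. The monotonicity axioms (I1) and (I2) then follow by a chain-of-monotonicity argument: for (I1), increasing $x$ decreases $1-x$, which by increasingness of $g$ decreases $g(1-x)$ and hence the argument of $g^{(-1)}$; for (I2) the dependence on $y$ is even simpler, involving only the non-decreasing maps $g$ and $g^{(-1)}$. In both cases the fact that $g^{(-1)}$ takes values in $[0,1]$ (by definition of the supremum over a subset of $[0,1]$) ensures $I^g$ is well-defined on the unit square.

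The substantive part is (I3). I would compute each of the three boundary values directly from the formula, using $g(0)=0$ to see that $I^g(1,0)=g^{(-1)}(0)=\sup\emptyset=0$, and reducing both $I^g(0,0)$ and $I^g(1,1)$ to the single evaluation $g^{(-1)}(g(1))$. The main step requiring care is this last value: by strict monotonicity of $g$ the set $\{x\in[0,1] : g(x)<g(1)\}$ equals $[0,1)$, whose supremum is $1$, and this is precisely where the hypothesis that $g$ is strictly (not merely weakly) increasing is used. Aside from keeping this observation clear, and handling the case $g(1)=+\infty$ through the standard conventions for the pseudo-inverse, I do not anticipate any real obstacle; the rest is a short chain of implications from the definition of $g^{(-1)}$.
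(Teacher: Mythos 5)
Your proof is correct. The paper itself gives no proof of this theorem (it is imported from the cited source), and your direct verification of (I1)--(I3) is the standard argument: monotonicity of the pseudo-inverse $g^{(-1)}(y)=\sup\{x\in[0,1]\mid g(x)<y\}$ propagated through the composition, $g^{(-1)}(0)=\sup\emptyset=0$ for $I^g(1,0)$, and $g^{(-1)}(g(1))=\sup[0,1)=1$ for the other two boundary values, with the strictness of $g$ correctly identified as the hypothesis that makes this last supremum equal to $1$.
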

The fuzzy implication $I^g$ can be generalized. This generalization is based on 
replacing  the standard negation by an arbitrary one. 
\begin{theorem}\cite{smu4}
Let $g: [0,1] \rightarrow [0, \infty]$ be a strictly increasing
function such that $g(0)=0$ and $N$ be a fuzzy negation. Then the function $I_N^g$
\begin{equation}\label{g}
I_N^g(x,y)=g^{(-1)}(g(N(x))+g(y)),
\end{equation}
is a fuzzy implication.
\end{theorem}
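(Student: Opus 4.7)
The plan is to verify the three defining conditions (I1), (I2), (I3) of Definition \ref{impl} directly. The argument is essentially a monotonicity/boundary check, and it largely parallels the proof of Theorem \ref{implg}, with the standard negation $1-x$ replaced by an arbitrary fuzzy negation $N$. I would first recall that, because $g$ is strictly increasing with $g(0)=0$, its pseudo-inverse $g^{(-1)}(y)=\sup\{x\in[0,1]\mid g(x)<y\}$ is non-decreasing on $[0,\infty]$, satisfies $g^{(-1)}(0)=0$, and fulfills $g^{(-1)}(g(t))=t$ for every $t\in[0,1]$ (this last identity is the one boundary fact I would want to state explicitly, because it carries $I_N^g$ at the corners of the square).

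Next I would check (I1) and (I2). For the first variable, fix $y$ and take $x_1\le x_2$. Since $N$ is non-increasing, $N(x_1)\ge N(x_2)$, and since $g$ is (strictly) increasing, $g(N(x_1))\ge g(N(x_2))$, hence $g(N(x_1))+g(y)\ge g(N(x_2))+g(y)$. Monotonicity of $g^{(-1)}$ then gives $I_N^g(x_1,y)\ge I_N^g(x_2,y)$. The argument for (I2) is analogous, using that $g$ is increasing in $y$ and that $g^{(-1)}$ is non-decreasing.

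For (I3) I would evaluate the three corners using $N(0)=1$, $N(1)=0$, and $g(0)=0$:
\begin{align*}
I_N^g(1,0) &= g^{(-1)}\bigl(g(N(1))+g(0)\bigr) = g^{(-1)}(0) = 0,\\
I_N^g(0,0) &= g^{(-1)}\bigl(g(N(0))+g(0)\bigr) = g^{(-1)}(g(1)) = 1,\\
I_N^g(1,1) &= g^{(-1)}\bigl(g(N(1))+g(1)\bigr) = g^{(-1)}(g(1)) = 1.
\end{align*}
This finishes the three required boundary values.

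The only genuine subtlety, and hence the main thing I would be careful about, is the behaviour of the pseudo-inverse $g^{(-1)}$ at the two extremes: at $0$ (needed for $I_N^g(1,0)=0$) and at values $\ge g(1)$ (needed whenever $N(x)=1$ or $y=1$, because then $g(N(x))+g(y)\ge g(1)$ and the sum may overshoot $H(g)$ or even be $+\infty$). Both cases are handled by the sup-definition of $g^{(-1)}$ and the convention $g^{(-1)}(t)=1$ for $t\ge g(1)$, but this is where I would write things out carefully rather than waving hands. Beyond that, no further properties of $N$ (continuity, strictness, involutivity) are needed, which nicely shows that the construction works for any fuzzy negation.
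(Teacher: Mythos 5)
Your proof is correct and is the standard direct verification one would expect; the paper itself states this theorem as a citation from [smu4] without reproducing a proof, so there is nothing to diverge from. Your explicit attention to the pseudo-inverse at the extremes (that $g^{(-1)}(0)=0$, $g^{(-1)}(g(t))=t$, and $g^{(-1)}(t)=1$ for $t\ge g(1)$ all follow from the sup-definition $g^{(-1)}(y)=\sup\{x\mid g(x)<y\}$) is exactly the right place to be careful, and the monotonicity and boundary checks go through as written.
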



\section{The residual operators based on the considered conjunctions}

As mentioned in the first section, we have found residual operators of conjunctions which were based on empirically measured data. 

The first conjunction was the t-norm $T^Y_2$ which is given by
$$
  T^Y_2(x,y) = \max\left\{0, 1 - \left((1-x)^2 + (1-y)^2\right)^{\frac{1}{2}}\right\}.
$$
It is Yager's t-norm with parameter $p = 2$. 
The corresponding residual operator (Fig.~\ref{fig:restnorm}) is given by 
\begin{equation}
   I_{T^Y_2}(x,y) = 1 - (\max((1-y)^2 - (1-x)^2), 0)^{\frac{1}{2}}.
\end{equation}
In general, residual implications which are based on Yager t-norms  $T^Y_{p}$ are given by:
\begin{equation}
   I_{T^Y_p}(x,y) = 1 - (\max((1-y)^p - (1-x)^p), 0)^{\frac{1}{p}}.
\end{equation}
Now, we will investigate properties of implications $I_{T^Y_p}$ and their membership in the classes of implications. 
We turn our attention to the class of $I^g$ implications.
The boundary conditions for  $I^g$ implications are given by
$$   I^g(x, 0) = g^{(-1)}\circ g(1-x) = 1-x, $$
$$   I^g(1, y) = g^{(-1)}\circ g(y) = y.$$

On the other hand, residual implication $ I_{T^Y_p}$ satisfies the following equality
$$
   I_{T^Y_p}(x, 0) = 1 - (\max(1 - (1-x)^p), 0)^{\frac{1}{p}} =
    1 - (1 - (1-x)^p)^{\frac{1}{p}}.
$$
Therefore the implication $ I_{T^Y_p}$  can not be expressed as $I^g$, but as  $I^g_N$.
The function
$$N_p(x) = I_{T^Y_p}(x, 0) = 1 - (1 - (1-x)^p)^{\frac{1}{p}}$$
is a negation (particularly, for $p = 2$ we get $N_2(x) = 1- \sqrt{x(2-x)} )$ and
since $$I^g_N(x,0) = g^{(-1)}(g(N(x)), g(0)) = N(x),$$ the implication $I_{T^Y_p}$ is expressed by the  function $I^g_N$ with negation $N = N_p$.

Furthermore, we consider the function $g_p(x) = 1 - (1 - x)^p$, where
$g_p^{-1}(x) = 1 - (1-x)^{\frac{1}{p}}$. Then the function $I^{g_p}_{N_p}$ is given by
\begin{eqnarray*}
  I^{g_p}_{N_p}(x,y) &=& g_p^{-1}(\min(g_p(N_p(x))+g_p(y), g_p(1))) \\ &=&
    g_p^{-1}(\min((1-x)^p+1 - (1-y)^p, 1)) \\ &=&
    1 - (1 - \min((1-x)^p+1 - (1-y)^p, 1))^{\frac{1}{p}}.
\end{eqnarray*}
Since $1 - \min(1 - x, 1 - y) = \max(x,y)$ we have
$$
  I^{g_p}_{N_p}(x,y) = 1 - (\max((1-y)^p - (1-x)^p), 0)^{\frac{1}{p}} = I_{T^Y_p}(x,y).
$$


Let $p >0$. Directly from Definition \ref{propimpl} we get that the implications $I_{T^Y_p}$ satisfy properties (IP) and (NP). Since the  implications $I_{T^Y_p}$ are residual operators based on the left-continuous t-norms $T^Y_p$, and due to Theorem \ref{bacz2},  properties  (EP) and (OP) are satisfied for these implications.  Additionally
$$
  I_{T^Y_p}(N_p(y), N_p(x)) = 1 - (\max(1-(1-x)^p - (1-(1-y))^p), 0)^{\frac{1}{p}} = I_{T^Y_p}(x,y),
$$
which is the property (CP) with respect to the negations $N_p$.

The next conjunction is a quasi-arithmetic mean $M$ (for more details see \cite{HaHl}). Its residual operator is given by formula
\begin{eqnarray*}
  M_r(x,y) &=& \sup\{t\in[0,1]\ |\ M(x,t) \leq y\} = 
  \sup\left\{t\in[0,1]\ \Big|\ \frac{1}{2}(x^2 + t^2) \leq y^2\right\} \\ &=& 
  (\min\{\max\{ 2y^2 - x^2, 0 \}, 1\})^{\frac{1}{2}}.
\end{eqnarray*}
This operator is not an implication, since the boundary condition $I(0,0)=1$ is violated (Fig.~\ref{fig:resmean}). The same problem occurs with residual operator of the last conjunction, which is Choquet integral (Fig.~\ref{fig:rescho}). Therefore we will not discuss these operators.

\noindent
\begin{figure}[h!]
  \centering
  \begin{subfigure}[b]{0.48\textwidth}
    \centering
    \includegraphics[scale=0.32]{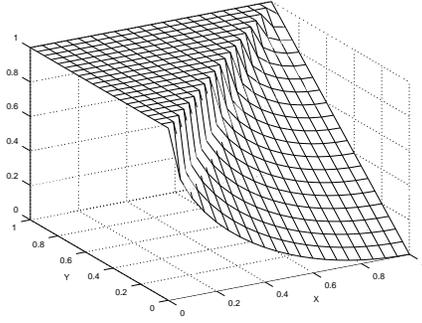}
    \caption{Residual operator of t-norm $T^Y_2$.}
    \label{fig:restnorm}
  \end{subfigure}
~
  \begin{subfigure}[b]{0.49\textwidth}
    \includegraphics[scale=0.32]{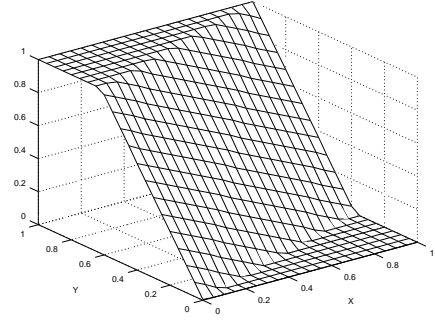}
    \caption{Residual operator of Choquet integral.}
    \label{fig:rescho}
  \end{subfigure}
~
  \begin{subfigure}[b]{0.9\textwidth}
    \centering
    \includegraphics[scale=0.32]{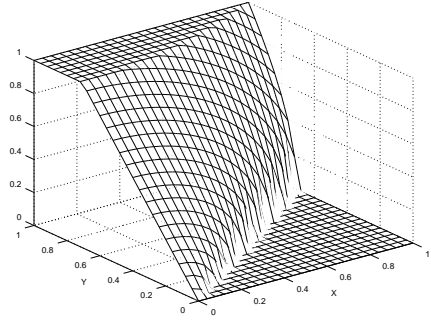}
    \caption{Residual operator of  quasi arithmetic mean $M_r$.}
    \label{fig:resmean}
  \end{subfigure}
  \caption{Residual operators based on the found conjunctions.}
\end{figure}

\section{Properties of $I^g$ and $I^g_N$ implications}
In this section we investigate properties of generated implications $I^g$ and $I^g_N$.
We focus on relations between these generated implications and some well known classes
of implications.

In the following text we denote by $\mathbb{I}^g$  the class of $I^g$ implications and by $\mathbb{I}^g_N$
the class of $I^g_N$ implications. Further we denote by $\mathbb{I_{T_{LC}}}$  
the class of $R$-implications based on left-continuous t-norm and by $\mathbb{I_{S, N}}$
the class of $(S, N)$-implications. With the subscript $c$ we denote a continuous
function (we use it in the context of continuous functions $g$ and $N$). 



Two of the best known classes of implications are $R$-implications and $(S,N)$-implications. 
In the first part we focus on the relation of the classes $\mathbb{I}^{g}_{N}$ and $\mathbb{I_{S, N}}$.
We are interested in two questions\,--\,whether the class $\mathbb{I}^{g}_{N}$ is a proper subclass
of $\mathbb{I_{S, N}}$ and if not, find a subclass $C$ of $\mathbb{I}^{g}_{N}$ satisfying
$C\subseteq \mathbb{I_{S, N}}$.

\begin{lemma}\label{lem:cont}
  Let $I: [0,1]^2\rightarrow [0,1]$ be an implication. If $I \in \mathbb{I}^{g_c}_{N}$ then 
  $I \in \mathbb{I_{S, N}}$.
\end{lemma}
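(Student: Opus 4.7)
The plan is to exhibit the implication $I$ as $S(N(x),y)$ for a t-conorm $S$ manufactured directly from the generator $g$, so that membership in $\mathbb{I}_{S,N}$ becomes essentially a matter of unpacking definitions.

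First, assume $I = I^g_N$ for a continuous strictly increasing $g\colon[0,1]\to[0,\infty]$ with $g(0)=0$ and some fuzzy negation $N$. I would define the candidate conorm by
\[
  S_g(a,b) = g^{(-1)}\bigl(g(a)+g(b)\bigr), \qquad a,b\in[0,1],
\]
using exactly the additive-generator construction recalled in the Remark on t-conorms. The next step is to check that $g$ is admissible as an additive generator of a t-conorm. Continuity of $g$ gives $H(g)=[0,g(1)]$, and since $g(x)+g(y)\ge 0$ we have $g(x)+g(y)\in[0,g(1)]\cup[g(1),+\infty]=H(g)\cup[g(1^-),+\infty]$, so the compatibility condition from the Remark is automatically fulfilled.

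The verification of the t-conorm axioms for $S_g$ is then routine: commutativity is evident, monotonicity follows since $g$ is strictly increasing and $g^{(-1)}$ is non-decreasing, the neutral element $0$ follows from $g(0)=0$ together with the identity $g^{(-1)}(g(a))=a$ (which holds because $g$ is continuous and strictly increasing, so the pseudo-inverse coincides with the honest inverse on $H(g)$), and associativity is a standard computation splitting into the cases $g(x)+g(y)\le g(1)$ and $g(x)+g(y)>g(1)$, where in the latter case both iterated expressions collapse to $1$.

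Once $S_g$ is confirmed to be a t-conorm, the conclusion is immediate: by definition of $I^g_N$,
\[
  I(x,y) = g^{(-1)}\bigl(g(N(x))+g(y)\bigr) = S_g(N(x),y),
\]
which exhibits $I$ as an $(S,N)$-implication with t-conorm $S_g$ and negation $N$, so $I\in\mathbb{I}_{S,N}$. The only point that needs care is the admissibility of $g$ as a conorm generator and the identity $g^{(-1)}(g(a))=a$; both rely crucially on continuity of $g$, which is precisely the hypothesis indicated by the subscript $c$, so there is no real obstacle beyond this bookkeeping.
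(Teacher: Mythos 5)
Your proposal is correct and follows essentially the same route as the paper: recognize that a continuous strictly increasing $g$ with $g(0)=0$ is an additive generator of a t-conorm $S_g(a,b)=g^{(-1)}(g(a)+g(b))$, and then read off $I^g_N(x,y)=S_g(N(x),y)$. The paper simply asserts that $S_g$ is a t-conorm, whereas you verify the axioms explicitly; that is extra diligence, not a different argument.
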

\begin{proof}
We deal with $I^g_N$, where $N$ is an arbitrary negation and $g$ is a continuous generator. Since $g$ is a strictly 
  increasing continuous function with $g(0) = 0$, it holds
  $$
    g^{(-1)}(g(x) + g(y)) = S_g(x,y),
  $$
  where $S_g$ is t-conorm generated by $g$. Accordingly
  $$
    I(x,y) = I^g_N(x,y) = g^{(-1)}(g(N(x)) + g(y)) = S_g(N(x), y) 
  $$
  and thus $I \in \mathbb{I_{S, N}}$.
\end{proof}

For illustration we provide the following example:
\begin{example}
\label{g-cka} Let  $g:[0,1] \rightarrow [0,\infty]$ be a function 
given by the following formula
$$g(x)=-\ln(1-x).$$
The function $g$ is strictly increasing and continuous.
Its pseudoinverse function $g^{(-1)}$ is given by
$$g^{(-1)}(x)=1-e^{-x}\quad \mbox{for $x \in [0, \infty]$}.$$
Then for the function $g$ we get the following implication
$$I^{g}(x,y)=1-e^{\ln(x(1-y))}=1-x+xy,$$
which is $S_P(1-x,y),$ where $S_P$ is dual operator to the product t-norm and $I^g$ is thus an $(S, N)$-implication with negation $N(x)=1-x.$
\end{example}

\begin{lemma}
 For the classes  $\mathbb{I}^{g}_{N} $ and $\mathbb{I_{S,N}},$ it holds $\mathbb{I}^{g}_{N} \setminus \mathbb{I_{S,N}} \neq \emptyset$.
\end{lemma}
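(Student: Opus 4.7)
The plan is to exhibit a single implication that belongs to $\mathbb{I}^{g}_{N}$ but not to $\mathbb{I_{S,N}}$. By Lemma~\ref{lem:cont} any such witness must come from a discontinuous generator, so I intend to take the simplest example of that kind, namely a strictly increasing $g:[0,1]\to[0,\infty]$ with a single jump, paired with the standard negation $N_S$. Concretely I propose
$$g(x) = \begin{cases} x, & x \in [0, 1/2], \\ 2x, & x \in (1/2, 1]. \end{cases}$$
Since $g$ is strictly increasing and $g(0)=0$, Theorem~\ref{implg} guarantees that the associated $I^g$ is a fuzzy implication and hence lies in $\mathbb{I}^{g}_{N}$.

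To prove $I^g \notin \mathbb{I_{S,N}}$, the cleanest route is to show that $I^g$ fails the exchange principle (EP). Every $(S,N)$-implication automatically satisfies (EP), since $S(N(x),S(N(y),z)) = S(N(y),S(N(x),z))$ follows from the associativity and commutativity of the underlying t-conorm; a single triple $(x,y,z)$ witnessing the failure of exchange is therefore enough. The mechanism behind that failure is the fact that $g\circ g^{(-1)}$ is the identity on the range of $g$, namely $[0,1/2]\cup(1,2]$, but collapses the ``gap'' $(1/2,1]$ to the point $1/2$. Writing $a=g(N_S(x))$, $b=g(N_S(y))$, $c=g(z)$, the two sides of (EP) reduce to $g^{(-1)}(a+g(g^{(-1)}(b+c)))$ and $g^{(-1)}(b+g(g^{(-1)}(a+c)))$, so I want to pick $(x,y,z)$ with $b+c$ inside the gap $(1/2,1]$ and $a+c$ inside the ``good'' interval $(1,2]$.

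The choice $x=3/8$, $y=3/4$, $z=1/2$ does exactly this: $b+c = 1/4+1/2 = 3/4$ sits in the gap, so $g(g^{(-1)}(b+c)) = 1/2$, whereas $a+c = 5/4+1/2 = 7/4$ sits in $(1,2]$, where $g\circ g^{(-1)}$ acts as the identity. A short evaluation using the explicit form of $g^{(-1)}$ then yields $I^g(3/8, I^g(3/4,1/2)) = g^{(-1)}(7/4) = 7/8$ against $I^g(3/4, I^g(3/8,1/2)) = g^{(-1)}(2) = 1$, so (EP) is broken and $I^g \in \mathbb{I}^{g}_{N}\setminus\mathbb{I_{S,N}}$.

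The only genuine obstacle is the combinatorial one of locating a triple whose three relevant sums $b+c$, $a+c$ and $a+g(g^{(-1)}(b+c))$ straddle the discontinuity of $g$ in precisely the right way; once that is done the verification reduces to evaluating $g^{(-1)}$ on four rationals. If one wanted a cleaner presentation the same pattern works with $g$ having its jump at any interior point, so there is considerable freedom in the exact constants.
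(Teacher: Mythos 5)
Your proof is correct and follows essentially the same strategy as the paper's: both exhibit a strictly increasing, piecewise-linear generator with a single jump at an interior point, paired with the standard negation, and then break an associativity-type identity at one concrete numerical triple (your computations with $g^{(-1)}$ check out, giving $7/8$ versus $1$). The only difference is the finishing move --- you refute (EP) directly, which excludes every $(S,N)$-implication because $S(N(x),S(N(y),z))=S(N(y),S(N(x),z))$, whereas the paper first recovers $N(x)=1-x$ and $S(x,y)=I^f(1-x,y)$ from the boundary condition and then shows this induced $S$ fails associativity; the two verifications are essentially the same computation viewed from different sides.
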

\begin{proof} We assume  $\mathbb{I}^{g}_{N} \setminus \mathbb{I_{S,N}} = \emptyset$.

We turn our attention to the following example:
We consider the strictly increasing function  $f:[0,1] \rightarrow [0,\infty]$ which is given by formula
$$f(x)=\begin{cases}  x    &\mbox{if $x \leq 0.5$}, \\
0.5+0.5x      &\text{otherwise.} \end{cases}$$
Its pseudoinverse function is given by
 $$f^{(-1)}(x)=\begin{cases} x &\mbox{if $x \leq 0.5$}, \\
0.5   &\mbox{if $0.5 < x \leq 0.75$}, \\
2x-1   &\mbox{if $0.75 < x \leq 1$}, \\
1   &\mbox{if $1 < x$}. \end{cases}$$
Finally, for implication based on the function $f$ we get
$$I^{f}(x,y)=\begin{cases}  1-x+y      &\mbox{if $x \geq 0.5, y \leq 0.5, x-y \geq 0.5$}, \\
0.5      &\mbox{if $x \geq 0.5, y \leq 0.5, 0.25 \leq x-y < 0.5$}, \\
1-2x+2y      &\mbox{if $x \geq 0.5, y \leq 0.5, x-y < 0.25$}, \\
\min(1-x+2y,1)       &\mbox{if $x < 0.5, y \leq 0.5$}, \\
\min(2-2x+y,1)       &\mbox{if $x \geq 0.5, y>0.5$}, \\
1       &\mbox{if $x < 0.5, y > 0.5.$} \end{cases}$$
Now we will construct a negation $N$ and a t-conorm $S$ such that $I^{f}(x,y)=S(N(x), y).$
 From the boundary condition we get
 $$
  I^{f}(x,0) = f^{(-1)}\circ f(1-x) = 1-x = S(N(x), 0) = N(x)
 $$
 and therefore $S(x,y) = I^{f}(1-x,y)$ is a t-conorm.
 But 
 \begin{eqnarray*}
  S(0.3, S(0.35, 0.2)) &=& S(0.3, 0.5) = 1-1.4 + 1 = 0.6 \\
  S(S(0.3, 0.35), 0.2) &=& S(0.5, 0.2) = 0.5
 \end{eqnarray*}
 and thus $S$ is not associative, which is a contradiction.
\end{proof}

\begin{theorem}
 For the classes $\mathbb{I}^{g_c}, \mathbb{I}^{g_c}_{N_c}$ and $ \mathbb{I_{S,N}},$ it holds $\mathbb{I}^{g_c} \subset \mathbb{I}^{g_c}_{N_c} \subset \mathbb{I_{S,N}}$.
\end{theorem}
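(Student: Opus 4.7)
The plan is to split the claim into two separate inclusions and, for each, handle the easy direction first and then exhibit a concrete witness for strictness. The easy directions follow from earlier material in the paper, while the strictness parts carry the real content.

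For $\mathbb{I}^{g_c} \subseteq \mathbb{I}^{g_c}_{N_c}$, the inclusion is essentially by definition: if $g$ is a continuous strictly increasing function with $g(0)=0$, then $I^g(x,y) = g^{(-1)}(g(1-x)+g(y)) = I^g_{N_S}(x,y)$, where $N_S(x)=1-x$ is the standard (continuous) negation. To show the inclusion is strict, the key device is the natural negation: for every $I \in \mathbb{I}^{g}$ one has $N_I(x) = I(x,0) = g^{(-1)}(g(1-x)) = 1-x$, whereas the natural negation of $I^g_N$ is exactly $N$. Thus it suffices to pick any continuous $g$ and any continuous negation $N \neq N_S$ (for instance $g = \mathrm{id}$ and $N(x) = 1-x^2$) and note that the resulting $I^g_N$ has natural negation $N \neq N_S$, hence cannot be in $\mathbb{I}^{g_c}$.

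For $\mathbb{I}^{g_c}_{N_c} \subseteq \mathbb{I}_{S,N}$, I would invoke Lemma \ref{lem:cont} directly, since $\mathbb{I}^{g_c}_{N_c} \subseteq \mathbb{I}^{g_c}_{N}$. For strictness, my candidate is the Kleene--Dienes implication $I_{KD}(x,y) = \max(1-x,y)$, which is plainly an $(S,N)$-implication with $S$ the maximum t-conorm $S_M$ and $N = N_S$. Suppose towards contradiction that $I_{KD} = I^g_N$ for some continuous $g$ and continuous $N$. Reading off the natural negation gives $N(x) = I_{KD}(x,0) = 1-x$, so $N = N_S$. Because $g$ is a continuous strictly increasing function with $g(0)=0$, the operation $S_g(a,b) = g^{(-1)}(g(a)+g(b))$ is a continuous Archimedean t-conorm. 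Substituting $x = 1-a$ yields $S_g(a,b) = I_{KD}(1-a,b) = \max(a,b)$, forcing $S_g = S_M$. Since $S_M$ is not Archimedean, this is the desired contradiction.

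The main obstacle is the last step: pinning down that any continuous generator representation of $I_{KD}$ must produce $S_M$. This is overcome by the two-stage identification above, where the natural-negation identity first fixes $N$ and then the equation $S_g(a,b) = I_{KD}(N^{-1}(a),b)$ recovers $S_g$ uniquely; after that, the continuous-Archimedean versus non-Archimedean dichotomy concludes. Everything else amounts to direct substitution into the defining formula of $I^g_N$ and a quotation of Lemma \ref{lem:cont}.
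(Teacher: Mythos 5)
Your proof is correct and follows essentially the same route as the paper: the trivial identification $I^g = I^g_{N_S}$ for the first inclusion, strictness via the natural negation (the paper's witness is $I_{T^Y_2}$ where yours is $\min(1-x^2+y,1)$, but the distinguishing argument is identical), Lemma~\ref{lem:cont} for the second inclusion, and the Kleene--Dienes implication for its strictness. Your writeup is merely a bit more explicit than the paper's in justifying why $\max$ cannot arise as $S_g$ for a continuous generator (continuity of $g$ forces $S_g$ Archimedean, while $S_M$ is not), a point the paper compresses into ``$\max$ has no additive generator.''
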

\begin{proof}
  Apparently $\mathbb{I}^{g_c} \subseteq \mathbb{I}^{g_c}_{N_c}$ holds true and the implication $I_{T^Y_2}$
  from the previous section forms an example of an implication in $\mathbb{I}^{g_c}_{N_c} \setminus \mathbb{I}^{g_c}$.
  From Lemma~\ref{lem:cont} we get $\mathbb{I}^{g_c}_{N_c} \subseteq \mathbb{I_{S, N}}$.
  If we consider the $(S,N)$-implication $I(x,y) = \max\{1-x, y\}$ and try to express this implication as $I^g_N$, we obtain
  $I(x,y) = \max\{1-x, y\} = g^{(-1)}(g(1-x) + g(y))$, which is an expresion via additive generator, but the t-conorm $\max\{x, y\}$ has no additive
generator. Therefore $\mathbb{I_{S,N}}\setminus \mathbb{I}^{g_c}_{N_c} \not = \emptyset.$
\end{proof}

The second part is devoted to the relation of a subclass of $\mathbb{I}^{g}_{N}$, with continuous generator $g$ and continuous negation $N$, and $\mathbb{I_{T_{LC}}}$, which is explained in the following assertion.

\begin{lemma}\label{lem:rconj}
  Let $I: [0,1]^2\rightarrow [0,1]$ be an implication such that $I \in\mathbb{I}^{g_c}_{N_c}$.
  Then $I$ is an $R$-implication based on left-continuous t-norm if and only if $I$ is $\Phi$-conjugate with
  the \L ukasiewicz implication.
\end{lemma}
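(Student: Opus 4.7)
The statement is a biconditional and I would prove each direction by invoking the two characterization theorems already at hand (Theorem \ref{bacz2} and Theorem \ref{the:conjugate}). The role of the hypothesis $I \in \mathbb{I}^{g_c}_{N_c}$ is only to supply continuity of $I$; once continuity together with (OP) and (EP) is in hand, Theorem \ref{the:conjugate} closes the matter.

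For the direction ($\Rightarrow$), assume $I \in \mathbb{I}^{g_c}_{N_c}$ is an $R$-implication based on some left-continuous t-norm. From Theorem \ref{bacz2} I get (OP) and (EP) for free. The substantive step is continuity of $I(x,y) = g^{(-1)}(g(N(x)) + g(y))$. Since $N$ is continuous, it is enough to observe that $g^{(-1)} : [0,\infty] \to [0,1]$ is continuous whenever $g$ is continuous and strictly increasing with $g(0) = 0$: on the range $[0, g(1)]$ the pseudo-inverse coincides with the ordinary inverse $g^{-1}$ (continuous by the inverse function theorem for monotone continuous bijections), and on $(g(1), \infty]$ it is constantly $1$, agreeing with $g^{-1}(g(1)) = 1$ at the boundary. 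Hence $I$ is continuous, and Theorem \ref{the:conjugate} yields that $I$ is $\Phi$-conjugate with the \L ukasiewicz implication.

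For the direction ($\Leftarrow$), assume $I$ is $\Phi$-conjugate with the \L ukasiewicz implication. Theorem \ref{the:conjugate} gives that $I$ is continuous and satisfies (OP) and (EP). As $I$ is a fuzzy implication, it satisfies (I2) by Definition \ref{impl}, and continuity of $I$ in particular implies right-continuity of $I(x,\cdot)$ for every $x \in [0,1]$. Theorem \ref{bacz2} then identifies $I$ as an $R$-implication based on some left-continuous t-norm, which completes the biconditional.

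The only real obstacle in this plan is the continuity argument for $g^{(-1)}$ in the forward direction; the rest is a rather clean packaging of the two characterization theorems, and the hypothesis $I \in \mathbb{I}^{g_c}_{N_c}$ is used solely to guarantee this continuity. Note in particular that associativity of the underlying t-conorm $S_g$ is irrelevant here, in contrast with the earlier lemmas in the section.
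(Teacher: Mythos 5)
Your proof is correct and follows essentially the same route as the paper: both directions are obtained by combining Theorem~\ref{bacz2} with Theorem~\ref{the:conjugate}, with the hypothesis $I\in\mathbb{I}^{g_c}_{N_c}$ used only to secure continuity of $I$ in the forward direction. The sole (cosmetic) difference is that you establish that continuity by arguing directly that the pseudo-inverse $g^{(-1)}$ is continuous, whereas the paper routes the same fact through Lemma~\ref{lem:cont} and the continuity of the generated t-conorm $S_g$.
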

\begin{proof}
 $(\Rightarrow)$ We assume that $I = I^g_N$ for some continuous $g$ and $N$. According to
 Lemma~\ref{lem:cont} we get $I(x,y) = S_g(N(x), y)$. Since both $g$ and $N$ are continuous
 functions, also $S_g$ is continuous and therefore $I$ is continuous, too. By the assumption, $I$
 is an  $R$-implication based on left-continuous t-norm. From Theorem~\ref{bacz2} we directly
 get that, $I$ satisfying properties (OP) and (EP) and from Theorem~\ref{the:conjugate}
 we finally obtain that $I$ is $\Phi$-conjugate with the \L ukasiewicz implication.

 
 $(\Leftarrow)$ Since $I$ is $\Phi$-conjugate with the \L ukasiewicz implication, according to 
 Theorem~\ref{the:conjugate}, $I$ is a continuous implication satisfying (OP), (EP) and 
 from Theorem~\ref{bacz2} we get that $I$ is an $R$-implication based on a left-continuous t-norm.
\end{proof}

\begin{theorem}
  Let $I: [0,1]^2\rightarrow [0,1]$ be an implication such that $I\in\mathbb{I}^{g_c}_{N_c}$. 
  Then $I$ is an $R$-implication based on a left-continuous t-norm if and only if
  $I = I^\varphi_{N_\varphi}$, where $N_\varphi(x) = \varphi^{-1}(1 - \varphi(x))$ for some $\varphi \in \Phi$.
\end{theorem}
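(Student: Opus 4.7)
The plan is to reduce the statement to Lemma~\ref{lem:rconj} and then identify the $\Phi$-conjugates of the \L{}ukasiewicz implication with the expressions of the form $I^\varphi_{N_\varphi}$ by a direct algebraic verification. Concretely, the bridging observation is that for any $\varphi\in\Phi$ we have
$$
  I^\varphi_{N_\varphi}(x,y) \;=\; \varphi^{(-1)}\!\bigl(\varphi(N_\varphi(x)) + \varphi(y)\bigr) \;=\; \varphi^{(-1)}\!\bigl(1 - \varphi(x) + \varphi(y)\bigr),
$$
which, since $\varphi([0,1])=[0,1]$ and $\varphi^{(-1)}$ equals $\varphi^{-1}$ on $[0,1]$ and takes value $1$ above, coincides with $\varphi^{-1}(\min\{1-\varphi(x)+\varphi(y),\,1\}) = (I_L)_\varphi(x,y)$, the $\Phi$-conjugate of the \L{}ukasiewicz implication $I_L(x,y)=\min\{1-x+y,1\}$.

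First, I would verify that $\varphi$ qualifies as a continuous generator $g_c$ in the sense of Theorem~\ref{implg} (it is a strictly increasing, continuous map with $\varphi(0)=0$) and that $N_\varphi$ is a continuous (in fact strong) fuzzy negation, since $N_\varphi(0)=\varphi^{-1}(1)=1$, $N_\varphi(1)=\varphi^{-1}(0)=0$, and $N_\varphi$ is strictly decreasing and continuous as a composition of monotone continuous bijections. Hence $I^\varphi_{N_\varphi}\in \mathbb{I}^{g_c}_{N_c}$, and the identification $I^\varphi_{N_\varphi}=(I_L)_\varphi$ is a legitimate assertion within the class under discussion.

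For the \emph{if} direction, if $I = I^\varphi_{N_\varphi}$ for some $\varphi\in\Phi$, the identity above shows $I=(I_L)_\varphi$, so $I$ is $\Phi$-conjugate with the \L{}ukasiewicz implication, and Lemma~\ref{lem:rconj} gives that $I$ is an $R$-implication based on a left-continuous t-norm. For the \emph{only if} direction, assume $I\in\mathbb{I}^{g_c}_{N_c}$ is an $R$-implication based on a left-continuous t-norm; Lemma~\ref{lem:rconj} yields a $\varphi\in\Phi$ with $I=(I_L)_\varphi$, and then the identity above rewrites this as $I = I^\varphi_{N_\varphi}$.

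The only delicate point is the pseudoinverse computation at the boundary $1-\varphi(x)+\varphi(y)\ge 1$, i.e.\ when $y\ge x$; here one must check that $\varphi^{(-1)}$ (defined on $[0,+\infty]$ by $\sup\{t : \varphi(t)<s\}$) correctly returns $1$, so that the clipping matches the $\min$ inside the \L{}ukasiewicz formula. Once this boundary behaviour is confirmed, the rest is routine substitution and the result follows immediately from Lemma~\ref{lem:rconj}, so I expect no genuine obstacle beyond careful bookkeeping of the pseudoinverse.
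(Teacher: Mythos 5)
Your proposal is correct and follows essentially the same route as the paper: both directions are reduced to Lemma~\ref{lem:rconj} via the algebraic identity $(I_{\mathbf{LK}})_\varphi = I^\varphi_{N_\varphi}$, obtained by substituting $\varphi(N_\varphi(x)) = 1-\varphi(x)$ and matching the pseudoinverse clipping with the $\min$ in the \L{}ukasiewicz formula. Your added checks (that $I^\varphi_{N_\varphi}\in\mathbb{I}^{g_c}_{N_c}$ and that $\varphi^{(-1)}$ returns $1$ above $\varphi(1)$) are sensible bookkeeping that the paper leaves implicit.
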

\begin{proof}
 $(\Rightarrow)$ Since $I$ is an $R$-implication based on a left-continuous t-norm, from Lemma~\ref{lem:rconj}
 we get that $I$ is $\Phi$-conjugate with the \L ukasiewicz implication, and thus for all $x,y\in[0,1],$
 $$
    I(x,y) = (I_{\mathbf{LK}}(x,y))_\varphi = \varphi^{-1}(\min\{1-\varphi(x)+\varphi(y), 1\}) = I^\varphi_{N_\varphi}(x,y),
  $$
  where $I_{\mathbf{LK}}$ is the \L ukasiewicz implication given by $I_{\mathbf{LK}}(x,y) = \min\{1 - x+y, 1\}$.
  The last equality holds because, for all $x,y\in[0,1]$
   $$
    I^\varphi_{N_\varphi}(x,y) = \varphi^{-1}(\min\{\varphi(N_\varphi(x)) + \varphi(y), \varphi(1)\}) = 
      \varphi^{-1}(\min\{1 - \varphi(x) + \varphi(y), 1\}).
  $$
  $(\Leftarrow)$ This directly follows from  Lemma~\ref{lem:rconj} and equality $(I_{\mathbf{LK}})_\varphi = I^\varphi_{N_\varphi}$. 
\end{proof}

Directly from previous theorem we get what are the intersection of $\mathbb{I_{T_{LC}}}$ and $\mathbb{I}^{g_c}_{N_c}$,
$\mathbb{I}^{g_c}$ respectively.
(Fig.~\ref{fig:intersectionRIG}).

\begin{corollary}
 $\mathbb{I_{T_{LC}}} \cap \mathbb{I}^{g_c}_{N_c} = \mathbb{I}^{g_\varphi}_{N_\varphi}$, where 
 $\mathbb{I}^{g_\varphi}_{N_\varphi} = \{ I^\varphi_{N_\varphi}\ |\ \varphi\in\Phi\}$.
\end{corollary}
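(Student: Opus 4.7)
The plan is to establish the set equality by proving both inclusions, leveraging the immediately preceding theorem as a black box in each direction.

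For $(\subseteq)$, I would take an arbitrary $I \in \mathbb{I_{T_{LC}}} \cap \mathbb{I}^{g_c}_{N_c}$. Since $I \in \mathbb{I}^{g_c}_{N_c}$, the hypothesis of the previous theorem is satisfied, and since additionally $I \in \mathbb{I_{T_{LC}}}$ (i.e., $I$ is an $R$-implication based on a left-continuous t-norm), the $(\Rightarrow)$ direction of that theorem gives $I = I^\varphi_{N_\varphi}$ with $N_\varphi(x) = \varphi^{-1}(1-\varphi(x))$ for some $\varphi \in \Phi$. Hence $I \in \mathbb{I}^{g_\varphi}_{N_\varphi}$.

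For $(\supseteq)$, I would take $I = I^\varphi_{N_\varphi}$ for some $\varphi \in \Phi$ and verify two things: that $I \in \mathbb{I}^{g_c}_{N_c}$ (so that the previous theorem applies) and that $I \in \mathbb{I_{T_{LC}}}$. The first is the step that needs a brief check: since $\varphi$ is an increasing bijection $[0,1] \to [0,1]$, it is automatically continuous, strictly increasing, and satisfies $\varphi(0)=0$, so it qualifies as a continuous generator of the form required by Theorem \ref{implg}. The induced map $N_\varphi(x) = \varphi^{-1}(1-\varphi(x))$ is a composition of continuous strictly monotone functions with $N_\varphi(0)=1$, $N_\varphi(1)=0$, hence a continuous fuzzy negation. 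Thus $I^\varphi_{N_\varphi} \in \mathbb{I}^{g_c}_{N_c}$. Now the $(\Leftarrow)$ direction of the previous theorem directly yields that $I$ is an $R$-implication based on a left-continuous t-norm, i.e., $I \in \mathbb{I_{T_{LC}}}$.

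I do not expect a genuine obstacle here: the corollary is essentially a repackaging of the preceding theorem as an intersection formula, and the only small bookkeeping step is the observation that elements of $\Phi$ automatically supply a continuous generator and induce a continuous negation via the given formula, which makes the membership in $\mathbb{I}^{g_c}_{N_c}$ free and legitimises invoking the theorem in the reverse direction.
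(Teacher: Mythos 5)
Your proof is correct and matches the paper's intent exactly: the paper gives no explicit argument, stating only that the corollary follows "directly from the previous theorem," and your two inclusions are precisely that derivation, with the one genuinely needed check (that any $\varphi\in\Phi$ is a continuous strictly increasing generator with $\varphi(0)=0$ and that $N_\varphi$ is a continuous negation, so $I^\varphi_{N_\varphi}\in\mathbb{I}^{g_c}_{N_c}$ and the theorem's hypothesis is met) carried out correctly.
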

\begin{corollary}
 $\mathbb{I_{T_{LC}}} \cap \mathbb{I}^{g_c} = \mathbb{I}^{g_\varphi}$, where 
 $\mathbb{I}^{g_\varphi} = \{ I^\varphi\ |\ \varphi\in\Phi, \varphi(x) + \varphi(1-x) = 1, x\in[0,1]\}$.
\end{corollary}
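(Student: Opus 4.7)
The plan is to reduce the claim to the previous theorem, exploiting the fact that $I^g = I^g_{N_S}$ where $N_S(x)=1-x$ is continuous; consequently $\mathbb{I}^{g_c} \subseteq \mathbb{I}^{g_c}_{N_c}$, and the previous theorem characterises $\mathbb{I_{T_{LC}}} \cap \mathbb{I}^{g_c}_{N_c}$ as the family of all $I^\varphi_{N_\varphi}$ with $N_\varphi(x)=\varphi^{-1}(1-\varphi(x))$ and $\varphi\in\Phi$. The task therefore reduces to identifying exactly when such a representation collapses to an $I^\varphi$ with the standard negation.

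For the inclusion $(\subseteq)$, suppose $I \in \mathbb{I_{T_{LC}}} \cap \mathbb{I}^{g_c}$. The previous theorem supplies a $\varphi \in \Phi$ with $I = I^\varphi_{N_\varphi}$. I would then compute the natural negation $N_I(x)=I(x,0)$ in both available representations: from $I = I^g$ one gets $N_I(x) = g^{(-1)}(g(1-x)) = 1-x$, while from $I = I^\varphi_{N_\varphi}$ one gets $N_I(x) = N_\varphi(x)$. Equating the two yields $\varphi^{-1}(1-\varphi(x)) = 1-x$, i.e.\ $\varphi(x)+\varphi(1-x)=1$. Under this identity $N_\varphi = N_S$, so $I^\varphi_{N_\varphi} = I^\varphi$ and hence $I \in \mathbb{I}^{g_\varphi}$.

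For $(\supseteq)$, suppose $I = I^\varphi$ with $\varphi(x)+\varphi(1-x)=1$. Substituting $\varphi(1-x) = 1-\varphi(x)$ into the definition of $I^\varphi$, and observing that $\varphi^{(-1)}(z) = \varphi^{-1}(\min\{z,1\})$ since $\varphi$ is a continuous increasing bijection onto $[0,1]$, one obtains
$$ I^\varphi(x,y) = \varphi^{-1}\bigl(\min\{1-\varphi(x)+\varphi(y),\,1\}\bigr) = (I_{\mathbf{LK}})_\varphi(x,y). $$
Thus $I$ is $\Phi$-conjugate with the \L ukasiewicz implication. Theorem~\ref{the:conjugate} then yields continuity together with (OP) and (EP), and Theorem~\ref{bacz2} shows $I \in \mathbb{I_{T_{LC}}}$. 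Since $I = I^\varphi \in \mathbb{I}^{g_c}$ by construction, $I$ lies in the claimed intersection.

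I expect the main obstacle to be the bookkeeping between the two representations $I = I^g$ (hardwired with $N_S$) and $I = I^\varphi_{N_\varphi}$ (with a $\varphi$-dependent negation): rather than arguing uniqueness of generators directly, I use the natural negation $N_I$ as a representation-independent invariant of $I$, which is what forces the identity $\varphi(x)+\varphi(1-x)=1$ and cleanly bridges both directions.
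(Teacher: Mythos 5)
Your proof is correct and follows the same route the paper intends: the paper derives this corollary ``directly from the previous theorem'' without writing out the details, and your argument is precisely the honest expansion of that claim, with the natural negation $N_I(x)=I(x,0)$ used as a representation-independent invariant to force $\varphi(x)+\varphi(1-x)=1$ and hence $N_\varphi=N_S$. Both directions check out (the $(\Leftarrow)$ step is essentially Lemma~\ref{lem:rconj} re-derived), so nothing is missing.
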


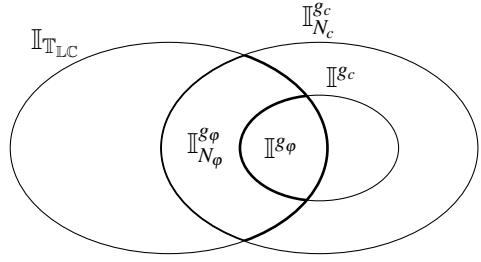
\begin{figure}
\centering
\begin{tikzpicture}
  \begin{scope}
    \clip (2,0) ellipse (60pt and 40pt);
    \draw [line width=1pt] (0,0) ellipse (60pt and 40pt);
    \clip (0,0) ellipse (60pt and 40pt);
    \draw [line width=1pt] (2,0) ellipse (60pt and 40pt);
    \draw [line width=1pt] (2,0) ellipse (30pt and 20pt);
  \end{scope}

  \draw (0,0) ellipse (60pt and 40pt);
  \draw (2,0) ellipse (60pt and 40pt);
  \draw (2,0) ellipse (30pt and 20pt);

  \draw (-1.5,1.4) node {$\mathbb{I_{T_{LC}}}$};
  \draw (2,1.7) node {$\mathbb{I}^{g_c}_{N_c}$};
  \draw (2.3,0.95) node {$\mathbb{I}^{g_c}$};
  \draw (0.5,0) node {$\mathbb{I}^{g_\varphi}_{N_\varphi}$};
  \draw (1.5 ,0) node {$\mathbb{I}^{g_\varphi}$};
\end{tikzpicture}
\caption{Intersection of the class of $R$-implications based on left-continuous t-norm and the class of $I^g_N$ implications with 
continuous generator $g$ and negation $N$.}
\label{fig:intersectionRIG}
\end{figure}

\section{Conclusion}
We have investigated the residual operator of the conjunction. This conjunction was based on empirical data. It turned out that the only operator based on generated t-norm is  an implication and it belongs to the less known class of generated implications $I^g_N$ where $N(x) \not = N_S(x)$.  We have studied the properties of $I^g_N$-implications. We showed that although the classes $I^g_N$  and $(S, N)$-implications are similar, they are not identical. And also, we examined the relationship between classes  $I^g_N$ and $R$-implications based on left-continuous t-norms. In the future we plan to model implications in human language via fitting residual operators to empirical data.

\paragraph*{Acknowledgement.} The work was supported by the BUT project FIT-S-14-2486.

\bibliographystyle{eptcs}
\bibliography{literature}

\end{document}